\DeclareMathOperator{\Span}{Span}
\DeclareMathOperator{\Aut}{Aut}
\DeclareMathOperator{\LAut}{LAut}
\newtheorem{theorem}[subsection]{Theorem}
\newtheorem{proposition}[subsection]{Proposition}
\newtheorem{corollary}[subsection]{Corollary}
\theoremstyle{definition}
\newtheorem{definition}[subsection]{Definition}
\theoremstyle{remark}
\newtheorem{remark}[subsection]{Remark}
\begin{document}

\title[On Local Automorphisms of $\mathfrak{sl}_2$]{On Local Automorphisms of $\mathfrak{sl}_2$}
\author{T. Becker}
\address{[T. Becker] Sonoma State University, 94928 California, USA}
\email{terrisbecker@gmail.com}
\author{J. Escobar Salsedo}
\address{[J.Escobar] Sonoma State University, 94928 California, USA}
\email{escobar192011@gmail.com}
\author{C. Salas}
\address{[C. Salas] Sonoma State University, 94928 California, USA}
\email{crystalsalas44@gmail.com}
\author{R. Turdibaev}
\address{[R. Turdibaev] Inha University in Tashkent, Ziyolilar 9, 100170 Tashkent, Uzbekistan.}
\email{r.turdibaev@inha.uz}

\thanks{
The first three authors were supported by the National Science Foundation, grant number 1658672.}

\begin{abstract}
	We establish that the set of local automorphisms $\LAut(\mathfrak{sl}_2)$ is the group $\Aut^{\pm}(\mathfrak{sl}_2)$ of all automorphisms and anti-automorphisms. For $n\geq 3$ we prove that anti-automorphisms are local automorphisms of $\mathfrak{sl}_n$. 
\end{abstract}
\subjclass[2010]{}
\keywords{Lie algebra, local automorphism, automorphism, group of automorphisms, group of local automorphisms}

\maketitle

\section*{Introduction}

The idea of a local automorphism dates back to 1988 \cite{Larson}, where for a given set of mappings $\mathcal{S}$ from a set $X$ into a set $Y$, Larson suggests to call a mapping $\theta$ to be interpolating $\mathcal{S}$  if for each $x\in X$ there is an element $S_x\in \mathcal{S}$, depending on $x$, with $\theta(x)=S_x(x)$. Set $X=Y=\mathcal{A}$ to be an algebra over a field $\mathbb{F}$ and $\mathcal{S}=\Aut(\mathcal{A})$ the set of automorphisms of $\mathcal{A}$, then a linear
endomorphism $\Delta$ of $\mathcal{A}$ is called \textit{a local automorphism} if it interpolates the set of automorphisms of $\mathcal{A}$. Local automorphisms have been 
introduced and studied for the algebra $B(X)$ of all bounded linear operators on infinite dimensional Banach space $X$ by Larson and Sourour \cite{LarsonSourour}.  It has been proven for the associative algebra of square matrices $\mathcal{M}_n(\mathbb{C})$ that any local automorphism is either an automorphism or an anti-automorphism \cite{LarsonSourour}. 

Investigation of local automorphisms continues for some subalgebras of $\mathcal{M}_n(\mathbb{C})$. In \cite{digraph_algebra} a description of the
local automorphisms of a finite-dimensional CSL algebra is given. A CSL algebra, or a digraph algebra, 
is an algebra which is spanned by a set of matrices which contains all diagonal matrix units $\{E_{ii}\}_{1\leq i \leq n}$ and is closed under multiplication. A local automorphism of a finite-dimensional CSL algebra is either an automorphism or an automorphism composed with a map that is the 
transpose map on a specific direct summand of the algebra and  the identity map 
on the complement to the summand. There is a discussion of local automorphisms of the 
algebra of niltriangular matrices $N(n,\mathcal{K})$ over an associative commutative ring $\mathcal{K}$ with identity in \cite{Elisova}. The authors of \cite{Elisova} provide a full description for $n=3$ and construct some non-trivial examples of local automorphisms for $n>3$. 

In our work we consider the simple Lie algebra $\mathfrak{sl}_n$ of traceless $n\times n$ matrices over a field of characteristic zero. With the formal investigations of local automorphisms of the associative matrix algebras in mind, our motivation is to investigate the local automorphisms of $\mathfrak{sl}_n$. 

For a finite-dimensional algebra $\mathfrak{g}$, the set of local automorphisms constitutes a group which we denote by $\LAut(\mathfrak{g})$. Clearly, $\Aut(\mathfrak{g})$ is a subgroup of $\LAut(\mathfrak{g})$, however it is not clear if it is a normal subgroup. 

We establish that any anti-automorphism is a local automorphism of $\mathfrak{sl}_n$. In fact, it is the composition of the matrix transposition and an automorphism of the algebra. We obtain a full description of a local automorphism of $\mathfrak{sl}_2$ -- 
it is either an automorphism or an anti-automorphism. Furthermore, $\Aut(\mathfrak{sl}_2)$ is a normal subgroup of $\LAut(\mathfrak{sl}_2)$ of index 2. 
We construct an example of a linear map that agrees with automorphisms on each basis element of $\mathfrak{sl}_n$ and is not a local automorphism. 


\section{Preliminaries}

\begin{definition} A vector space $\mathfrak{g}$ over $\mathbb{F}$ is called a Lie algebra if
	its multiplication (called  Lie 
	bracket and denoted by $(x,y) \mapsto [x,y]$) satisfies the identities:
	
	(1) $[x,x]=0$;
	
	(2) $[x,[y,z]]+[y,[z,x]]+[z,[x,y]]=0$.
	
\end{definition}

Every associative algebra $(\mathcal{A}, \ \cdot \ )$ turns into a Lie algebra $(\mathfrak{A},[-,-])$, where $\mathfrak{A}=\mathcal{A}$ and the Lie bracket is defined by the commutator $[a,b]=a\cdot b- b\cdot a$. This way the associative algebra $\mathcal{M}_n(\mathbb{F})$ of $n\times n$ square matrices over $\mathbb{F}$ turns into the Lie algebra $\mathfrak{gl}_n(\mathbb{F})$. Consider the subset of $\mathcal{M}_n(\mathbb{F})$ of traceless matrices. It is well-known that the product of traceless matrices is not necessarily traceless, hence it is not a subalgebra of $\mathcal{M}_n(\mathbb{F})$. This set is closed under 
the commutator and is a Lie subalgebra of $\mathfrak{gl}_n(\mathbb{F})$ denoted by $\mathfrak{sl}_n(\mathbb{F})$, which is the main focus of our work. 

In our work we consider a field $\mathbb{F}$ of characteristic zero and all definitions and results are restricted to this field. Hence, we omit $\mathbb{F}$ from now on.

A linear bijective map $\varphi:\mathfrak{g}\to \mathfrak{g}$ is called an automorphism (resp. an anti-automorphism), if it satisfies $\varphi([x,y])=[\varphi(x),\varphi(y)]$ (resp.  $\varphi([x,y])=[\varphi(y),\varphi(x)]$) for all $x,y\in \mathfrak{g}$. 	Analogous notions are defined in any algebra. 

The set of all automorphisms of $\mathfrak{g}$ constitutes a group with respect to composition, which is denoted by $\Aut(\mathfrak{g})$. The set of all anti-automorphisms of $\mathfrak{g}$ is denoted by $\Aut^{-}(\mathfrak{g})$. Due to skew-symmetry of the Lie bracket it is immediate that  $\varphi\in \Aut(\mathfrak{g})$ if and only if $-\varphi \in \Aut^{-}(\mathfrak{g})$. Note that the set $\Aut^{\pm}(\mathfrak{g})=\Aut(\mathfrak{g})\cup \Aut^{-}(\mathfrak{g})$ is a group called the \textit{signed automorphisms} group. Moreover, $\Aut(\mathfrak{g})$ is of index two in $\Aut^{\pm}(\mathfrak{g})$ and therefore is its normal subgroup \cite{JacobsonBA}.

A notion of a linear map which agrees with automorphisms at each point is given in the next definition.

\begin{definition}\label{definition_local}
	A linear map $\Delta:\mathfrak{g}\rightarrow\mathfrak{g}$ is a local automorphism if for any $X\in\mathfrak{g}$ there exists an automorphism $\varphi_X$ of $\mathfrak{g}$ such that $\Delta(X)=\varphi_X(X)$.
\end{definition}

It follows from Definition \ref{definition_local} that a local automorphism is an injective map.  The set of all local automorphisms is denoted by $\LAut(\mathfrak{g})$ and obviously there is an inclusion $\Aut(\mathfrak{g})\subseteq \LAut(\mathfrak{g})$. Moreover, it is straightforward that the composition of two local automorphisms is a local automorphism and $\LAut(\mathfrak{g})$ is a monoid. There is a statement \cite[Lemma 4]{Elisova} that $\LAut(\mathfrak{g})$ is a group  under the usual composition, which is not true since local automorphisms are generally speaking not surjective (see 
Theorem 3.11 of \cite{incidence}). However, the proof given in \cite{Elisova} works if $\mathfrak{g}$ is finite-dimensional. Indeed, then every local automorphism is surjective and given $\Delta \in\LAut(\mathfrak{g})$ we have $Y=\Delta(X)=\varphi_X(X)$ which implies $\Delta^{-1}(Y)=\varphi^{-1}_X(Y)$. Since $\varphi^{-1}_X\in \Aut(\mathfrak{g})$, the inverse of $\Delta$ also acts point-wise as an automorphism. Thus, there is the following  

\begin{proposition} \label{preliminary}	The set $\LAut(\mathfrak{g})$ constitutes a group with respect to composition if  $\mathfrak{g}$ is finite-dimensional.
\end{proposition}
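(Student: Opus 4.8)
The plan is to use what has already been established in the excerpt --- that $\LAut(\mathfrak{g})$ contains $\id$ and is a monoid under composition --- and supply the one missing ingredient, namely that every $\Delta\in\LAut(\mathfrak{g})$ is invertible with $\Delta^{-1}\in\LAut(\mathfrak{g})$. So the real work is to verify closure under inversion.

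First I would record bijectivity. Injectivity of an arbitrary $\Delta\in\LAut(\mathfrak{g})$ is immediate from Definition \ref{definition_local}: if $\Delta(X)=0$, then $\varphi_X(X)=0$ for some $\varphi_X\in\Aut(\mathfrak{g})$, and since $\varphi_X$ is a bijection this forces $X=0$. This is the only place the hypothesis enters: an injective linear endomorphism of a finite-dimensional vector space is surjective by rank--nullity, so $\Delta$ is bijective and $\Delta^{-1}$ is a well-defined linear endomorphism of $\mathfrak{g}$. Without finite-dimensionality this step can fail, which is precisely why the unrestricted assertion of \cite[Lemma 4]{Elisova} is not correct.

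Then I would check that $\Delta^{-1}$ is itself a local automorphism. Given $Y\in\mathfrak{g}$, set $X=\Delta^{-1}(Y)$, so that $Y=\Delta(X)=\varphi_X(X)$ for a suitable $\varphi_X\in\Aut(\mathfrak{g})$; applying $\varphi_X^{-1}$ yields $\Delta^{-1}(Y)=X=\varphi_X^{-1}(Y)$ with $\varphi_X^{-1}\in\Aut(\mathfrak{g})$. Hence $\Delta^{-1}$ agrees with an automorphism at every point, i.e.\ $\Delta^{-1}\in\LAut(\mathfrak{g})$, and the monoid $\LAut(\mathfrak{g})$ is therefore a group. I do not anticipate a genuine obstacle: the content is essentially the observation that the finite-dimensional hypothesis repairs the gap in the general argument by upgrading injectivity to bijectivity, and the only point that needs to be stated carefully is the stability of the ``local'' property under taking inverses, which is the short computation just indicated.
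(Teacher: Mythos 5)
Your proposal is correct and follows essentially the same route as the paper: injectivity from the definition, finite-dimensionality upgrading it to bijectivity, and then the computation $Y=\Delta(X)=\varphi_X(X)\Rightarrow\Delta^{-1}(Y)=\varphi_X^{-1}(Y)$ showing $\Delta^{-1}\in\LAut(\mathfrak{g})$. Your write-up is in fact slightly more explicit than the paper's (spelling out the rank--nullity step and the kernel argument for injectivity), but the content is identical.
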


For the associative algebra of all square matrices over the 
complex field, all local automorphisms are described by the following theorem. 

\begin{theorem}(\cite{LarsonSourour}) 
	A linear map $\alpha : \mathcal{M}_n(\mathbb{C}) \to \mathcal{M}_n(\mathbb{C})$ is a local automorphism iff $\alpha$ is an automorphism or an anti-automorphism, i.e., either $\alpha$ is 
	of the form $X\mapsto AXA^{-1}$  or $X \mapsto AX^T A^{-1}$ for a fixed $A\in \mathcal{M}_n(\mathbb{C})$.
\end{theorem}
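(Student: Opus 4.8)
The statement is an ``if and only if'', and I would treat the two directions quite differently. For the ``if'' direction: an automorphism of $\mathcal{M}_n(\mathbb{C})$ is a local automorphism by definition, so only anti-automorphisms $\alpha\colon X\mapsto AX^TA^{-1}$ need attention. Here I would use the classical fact that over any field a square matrix is conjugate to its transpose: fixing $X$, write $X^T=B_XXB_X^{-1}$ with $B_X$ invertible, so that $\alpha(X)=(AB_X)X(AB_X)^{-1}$ is the value at $X$ of the inner automorphism $\operatorname{Ad}(AB_X)$; hence $\alpha$ is a local automorphism.

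For the ``only if'' direction, suppose $\alpha$ is a local automorphism of $\mathcal{M}_n(\mathbb{C})$. The plan is to show first that $\alpha$ is a rank-preserving linear bijection. By Skolem--Noether every automorphism of $\mathcal{M}_n(\mathbb{C})$ is inner, so for each $X$ there is an invertible $C_X$ with $\alpha(X)=C_XXC_X^{-1}$; this gives at once $\operatorname{rank}\alpha(X)=\operatorname{rank}X$ for all $X$ and $\alpha(I)=I$. Since $\alpha$ is injective and $\mathcal{M}_n(\mathbb{C})$ is finite-dimensional, $\alpha$ is bijective, and then it restricts to a bijection of the set of rank-one matrices onto itself.

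The second half of the argument is to invoke the classical classification of rank-one preservers on a full matrix algebra, which says that such a bijection is $X\mapsto AXB$ or $X\mapsto AX^TB$ for fixed invertible $A,B$. If I wanted a self-contained proof I would argue geometrically: the rank-$\le1$ matrices are the products $xy^T$, the pencil $\{xy^T+t\,uv^T:t\in\mathbb{C}\}$ lies in the rank-$\le1$ set exactly when $x\parallel u$ or $y\parallel v$, and from this the maximal linear subspaces of rank-$\le1$ matrices turn out to be precisely the ``left spaces'' $\mathcal{L}_x=\{xy^T:y\}$ and the ``right spaces'' $\mathcal{R}_y=\{xy^T:x\}$. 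Because distinct $\mathcal{L}$'s (and distinct $\mathcal{R}$'s) meet only in $0$ while each $\mathcal{L}_x\cap\mathcal{R}_y$ is a line, the bijection $\alpha$ must either preserve the two sub-families or interchange them; tracking its action on the parametrizing lines together with $\mathbb{C}$-linearity of $\alpha$ then produces invertible $A,B$ with $\alpha(xy^T)=(Ax)(By)^T$, respectively $\alpha(xy^T)=(Ay)(Bx)^T$, hence $\alpha(X)=AXB^T$ or $\alpha(X)=AX^TB^T$. Finally, after renaming $B^T$ as $B$, the normalization $\alpha(I)=I$ forces $AB=I$, so $\alpha$ is $X\mapsto AXA^{-1}$ or $X\mapsto AX^TA^{-1}$, i.e.\ an automorphism or an anti-automorphism.

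The main obstacle is the rank-one preserver classification; everything else is routine manipulation with Skolem--Noether, the transpose-conjugacy fact, and basic linear algebra. In a finished write-up I would most likely just cite it (it goes back to Marcus--Moyls, with roots in Hua's work); if the geometric route is spelled out, the genuinely delicate point is the dichotomy ``preserve versus interchange'' — one has to verify that the splitting of the maximal rank-$\le1$ subspaces into the families $\{\mathcal{L}_x\}$ and $\{\mathcal{R}_y\}$ is intrinsic to their incidence pattern, so that a linear bijection cannot carry one $\mathcal{L}_x$ to a left space and another to a right space.
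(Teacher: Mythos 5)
This statement is not proved in the paper at all: it is quoted verbatim from Larson--Sourour as background, so there is no in-paper argument to compare against. Judged on its own, your proof is correct and is essentially the standard route to this result. Your ``if'' direction is the same device the paper itself uses later for $\mathfrak{sl}_n$ (its Proposition 2.1): since every automorphism of $\mathcal{M}_n(\mathbb{C})$ is inner (Skolem--Noether) and every matrix is similar to its transpose, the anti-automorphism $X\mapsto AX^TA^{-1}$ agrees at each $X$ with the inner automorphism by $AB_X$. Your ``only if'' direction (local automorphism $\Rightarrow$ rank-preserving linear bijection fixing $I$, then the Marcus--Moyls/Hua classification of rank-one preservers, then normalization $AB=I$) is exactly the kind of reduction Larson and Sourour make, and the details check out: injectivity follows because $\alpha(X)=C_XXC_X^{-1}$ has trivial kernel, bijectivity from finite-dimensionality, and the restriction to rank-one matrices is onto the rank-one matrices because the preimage of a rank-one matrix has the same rank; since rank-one matrices span $\mathcal{M}_n(\mathbb{C})$ and both $\alpha$ and $X\mapsto AXB$ (resp. $AX^TB$) are linear, agreement there gives agreement everywhere. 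The one point you flag as delicate, the ``preserve versus interchange'' dichotomy for the maximal rank-$\le 1$ subspaces, is in fact quickly dispatched: an injective linear map satisfies $\alpha(U\cap V)=\alpha(U)\cap\alpha(V)$, so two left spaces $\mathcal{L}_{x_1},\mathcal{L}_{x_2}$ with independent $x_1,x_2$ (which meet only in $0$) cannot be sent to a left space and a right space, since any $\mathcal{L}_{x'}\cap\mathcal{R}_{y'}$ is a line. In a final write-up, citing Marcus--Moyls for the rank-one preserver step, as you suggest, is entirely appropriate.
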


In this work we study local automorphisms of $\mathfrak{sl}_n$. Let us denote the unit matrix with zero entries everywhere but the intersection of the 
$i$-th row and $j$-th column by $E_{ij}$.  Recall that the simple Lie algebra of $n\times n$ matrices of trace zero is generated as 
\begin{center}
	$\mathfrak{sl}_n=\Span \langle E_{ij}, h_1,\dots,h_{n-1} \mid 1\leq i\neq j\leq n \rangle,$
\end{center}
where $h_i=E_{ii}-E_{i+1,i+1}$.

Before presenting the main results, we introduce the following theorem which is used to prove our results.

\begin{theorem}\cite{JacobsonLie}\label{automorphisms} Over an algebraically closed field of characteristic 0, the group of automorphisms of the Lie algebra $\mathfrak{sl}_2$ is the set of mappings $X\mapsto A^{-1}XA$ and the group of automorphisms of the Lie algebra $\mathfrak{sl}_n (n\geq 3)$ is the set of mappings of the form $X\mapsto A^{-1}XA$ or $X\mapsto-A^{-1}X^TA$.
\end{theorem}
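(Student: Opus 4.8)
The plan is to strip an arbitrary automorphism of inner automorphisms until it becomes either the identity or the negative transpose, using the conjugacy theorems for the distinguished subalgebras of a semisimple Lie algebra together with the identification of the outer automorphism group with the automorphism group of the Dynkin diagram. Throughout we are over an algebraically closed field of characteristic $0$, so every scalar has a square root and inner automorphisms may be taken to come from $SL_n$.

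First I would dispatch $\mathfrak{sl}_2$ by an explicit calculation. Fix the standard triple $e=E_{12}$, $h=E_{11}-E_{22}$, $f=E_{21}$, so $[h,e]=2e$, $[h,f]=-2f$, $[e,f]=h$. If $\varphi\in\Aut(\mathfrak{sl}_2)$, then $(\varphi(e),\varphi(h),\varphi(f))$ is again an $\mathfrak{sl}_2$-triple and $\varphi(e)$ is a nonzero $\mathrm{ad}$-nilpotent element; all nonzero nilpotents of $\mathfrak{sl}_2$ form a single $SL_2$-orbit, so after composing $\varphi$ with a conjugation I may assume $\varphi(e)=e$. The equation $[X,e]=2e$ has solution set $\{h+ce:c\in\mathbb{F}\}$ in $\mathfrak{sl}_2$, and conjugating by a unipotent matrix centralizing $e$ turns $h+ce$ into $h$ while fixing $e$, so I may further assume $\varphi(h)=h$. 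Then $[h,\varphi(f)]=-2\varphi(f)$ forces $\varphi(f)\in\mathbb{F}f$, and $[e,\varphi(f)]=h$ pins down $\varphi(f)=f$. Hence the normalized $\varphi$ is the identity, the original map was a conjugation $X\mapsto A^{-1}XA$, and conversely each such map is an automorphism; since scalars act trivially one may take $A\in SL_2$.

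For $n\ge 3$ I would carry out the analogous reduction at the level of the root datum. Fix the diagonal Cartan subalgebra $\mathfrak{h}$ and the Borel subalgebra $\mathfrak{b}$ of upper-triangular traceless matrices. Given $\varphi\in\Aut(\mathfrak{sl}_n)$, use conjugacy of Cartan subalgebras to reduce to $\varphi(\mathfrak{h})=\mathfrak{h}$, then the transitivity of $\mathrm{Ad}(N_{SL_n}(\mathfrak{h}))$ (acting as the Weyl group $S_n$) on the Borels containing $\mathfrak{h}$ to reduce to $\varphi(\mathfrak{b})=\mathfrak{b}$. Now $\varphi$ permutes the positive roots of type $A_{n-1}$ and preserves simplicity, hence induces an automorphism of the Dynkin diagram $A_{n-1}$, a path on at least two nodes whose automorphism group is cyclic of order two, generated by the flip. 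A short computation using $[-X^T,-Y^T]=-[X,Y]^T$ shows that $X\mapsto -X^T$ is an automorphism of $\mathfrak{sl}_n$ realizing precisely this flip. If the induced diagram automorphism is trivial, $\varphi$ fixes $\mathfrak{h}$ pointwise and scales each simple root vector $e_i$ by a nonzero constant; composing with a conjugation by a diagonal element of $SL_n$ normalizes these constants, and the Chevalley relations then force $\varphi=\id$, so $\varphi$ is a conjugation. If the induced diagram automorphism is the flip, composing $\varphi$ with $X\mapsto -X^T$ reduces to the previous case, so $\varphi$ has the form $X\mapsto -A^{-1}X^TA$.

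The main obstacle is the torus-rescaling step in the case $n\ge 3$: after normalizing $\mathfrak{h}$, $\mathfrak{b}$, and the induced diagram map, one is left with an automorphism fixing $\mathfrak{h}$ pointwise and multiplying each simple root vector $e_i$ by a scalar $t_i\in\mathbb{F}^{\times}$, and one must check that \emph{every} such tuple $(t_1,\dots,t_{n-1})$ arises from $\mathrm{Ad}$ of a diagonal matrix in $SL_n$ — equivalently, that the cocharacter lattice surjects onto the relevant torus of automorphisms — so that the residual freedom is genuinely inner. Granting the standard conjugacy theorems (of $\mathfrak{sl}_2$-triples, of Cartan subalgebras, of Borel subalgebras) and the structural fact that the outer automorphism group of $\mathfrak{sl}_n$ is the automorphism group of its Dynkin diagram, the remainder is bookkeeping with the Chevalley relations.
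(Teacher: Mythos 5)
The paper never proves this statement: it is quoted directly from Jacobson's \emph{Lie algebras} (reference \cite{JacobsonLie}), so there is no internal proof to compare with, and your task was in effect to reconstruct a known classical result. Your outline is the standard structure-theoretic argument and is essentially sound: the $\mathfrak{sl}_2$ computation is complete, and for $n\geq 3$ the reduction via conjugacy of Cartan and Borel subalgebras to an automorphism of the Dynkin diagram $A_{n-1}$ is the classical route. (Jacobson's own proof has a different flavour: for $n\geq 3$ the algebra $\mathfrak{sl}_n$ has exactly two inequivalent irreducible $n$-dimensional representations, the natural one and its dual, so composing an automorphism with the identity representation forces it to have the form $X\mapsto A^{-1}XA$ or $X\mapsto -A^{-1}X^{T}A$; your route instead buys the result from the general conjugacy theorems and diagram combinatorics.) Two points in your sketch should be tightened. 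First, $X\mapsto -X^{T}$ does \emph{not} preserve the upper-triangular Borel: relative to the diagonal Cartan it induces $-\operatorname{id}$ on the root system, and $-\operatorname{id}$ equals the diagram flip only after composing with the longest Weyl group element $w_0$, realized by conjugation with the antidiagonal permutation matrix. So in the flip case you must insert this extra inner automorphism before you are back in the Borel-preserving, trivial-diagram case; this costs nothing for the final statement, which absorbs all conjugations into $A$, but the claim that $-X^{T}$ ``realizes precisely this flip'' is not literally correct. Second, the step you single out as the main obstacle is immediate for $\mathfrak{sl}_n$: conjugation by a suitable diagonal matrix rescales the simple root vectors $E_{i,i+1}$ by any prescribed tuple of nonzero scalars (and can be pushed into $SL_n$ by an $n$-th root of the determinant, which exists over an algebraically closed field); after that, $[e_i,f_i]=h_i$ pins down the action on the $f_i$, and since the $e_i,f_i$ generate, the normalized automorphism is the identity. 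Also note that the ``structural fact'' that the outer automorphism group is the diagram automorphism group is not an extra input you need to grant--it is exactly what your argument proves along the way.
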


\begin{remark}\label{polynom}  
	In the case $n=2$ since there is only one type of automorphisms which is the 
	conjugation by a matrix, a local automorphism $\Delta$ of $\mathfrak{sl}_2$ sends a matrix $X$ to a similar matrix $\Delta(X)$. Since the characteristic polynomials of similar matrices are the same, we have $p_{X}(\lambda)=p_{\Delta(X)}(\lambda) $.
	The analogous statement for $n\geq 3$ holds only for the local automorphisms that act at each point as the 
	automorphism $X\mapsto A^{-1}XA$. 
\end{remark}

\section{Main Results}

\begin{proposition} Every anti-automorphism of $\mathfrak{sl}_n$ is a local automorphism.
\end{proposition}

\begin{proof} 
	Consider the transpose map $\Delta(X)=X^T$. It is a well-known fact that a square matrix is similar (conjugate) to its transpose \cite[Theorem 66]{MR2001037}. In this case, by Theorem \ref{automorphisms} we obtain that $\Delta$ is a local automorphism of $\mathfrak{sl}_n$. Furthermore, $\Delta$ is an anti-automorphism, and each anti-automorphism is of the form $\Delta \circ \varphi$, where $\varphi$ is an automorphism. Then $\Delta \circ \varphi$ is a local automorphism as the composition of two local automorphisms.	
\end{proof}

\begin{corollary}\label{signed_group_is_a_subgroup}
	The signed automorphism group $\Aut^\pm (\mathfrak{sl}_n)$ is a subgroup of  $\LAut(\mathfrak{sl}_n)$.
\end{corollary}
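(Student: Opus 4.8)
The corollary asserts that $\Aut^\pm(\mathfrak{sl}_n) = \Aut(\mathfrak{sl}_n) \cup \Aut^-(\mathfrak{sl}_n)$ sits inside $\LAut(\mathfrak{sl}_n)$. Since the preceding Proposition already establishes that every anti-automorphism is a local automorphism, and since the inclusion $\Aut(\mathfrak{sl}_n) \subseteq \LAut(\mathfrak{sl}_n)$ is recorded immediately after Definition \ref{definition_local}, the set-theoretic containment $\Aut^\pm(\mathfrak{sl}_n) \subseteq \LAut(\mathfrak{sl}_n)$ is essentially immediate as a union of two subsets each known to lie in $\LAut(\mathfrak{sl}_n)$. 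So the plan has two pieces: first assemble this containment from the two facts just cited; second, upgrade "subset" to "subgroup."

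For the subgroup claim, the cleanest route is to recall from the Preliminaries that $\Aut^\pm(\mathfrak{sl}_n)$ is already known to be a group (the signed automorphism group, cited from \cite{JacobsonBA}), and that $\LAut(\mathfrak{sl}_n)$ is a group by Proposition \ref{preliminary} since $\mathfrak{sl}_n$ is finite-dimensional. A subset of a group that is itself a group under the ambient operation is a subgroup, so once the containment is in hand there is nothing further to check — composition and inversion in $\Aut^\pm(\mathfrak{sl}_n)$ are the restrictions of composition and inversion in $\LAut(\mathfrak{sl}_n)$. Alternatively, if one prefers not to invoke that $\Aut^\pm$ is a priori a group, one can verify the subgroup criterion directly: the identity is an automorphism hence lies in the set; a composite of two signed automorphisms is again a signed automorphism (the parity of "number of anti-automorphisms used" adds mod $2$); and the inverse of an automorphism is an automorphism while the inverse of an anti-automorphism is an anti-automorphism.

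I expect essentially no obstacle here: the corollary is a bookkeeping consequence of the Proposition together with facts already assembled in the Preliminaries. The only point requiring a moment's care is making sure the ambient group structure is legitimately available — i.e., that one is entitled to speak of "subgroup" — which is exactly why Proposition \ref{preliminary} (finite-dimensionality forces local automorphisms to be bijective, hence invertible within $\LAut$) was stated. With that in place, the proof is a single short paragraph: $\Aut^\pm(\mathfrak{sl}_n) = \Aut(\mathfrak{sl}_n) \cup \Aut^-(\mathfrak{sl}_n) \subseteq \LAut(\mathfrak{sl}_n)$ by the inclusion noted after Definition \ref{definition_local} and by the Proposition, and being a group contained in a group it is a subgroup.
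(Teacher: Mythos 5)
Your proposal is correct and matches the paper's (implicit) reasoning exactly: the corollary is deduced from the preceding Proposition together with the inclusion $\Aut(\mathfrak{sl}_n)\subseteq \LAut(\mathfrak{sl}_n)$, the fact that $\Aut^\pm(\mathfrak{sl}_n)$ is a group as noted in the Preliminaries, and Proposition \ref{preliminary} supplying the ambient group structure on $\LAut(\mathfrak{sl}_n)$. No gaps; nothing further is needed.
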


In general, we could not obtain a full description of local automorphisms of $\mathfrak{sl}_n$. However, in the case $n=2$ we achieve this goal.

Let us use the matrix representation of the elements of the Lie algebra $\mathfrak{sl}_2$: 
\begin{center}
	$e=\left(\begin{array}{cc}
	0&1 \\[1mm]
	0&0 \\[1mm]
	\end{array}\right), \ \ \ f=\left(\begin{array}{ll}
	0&0 \\[1mm]
	1&0 \\[1mm]
	\end{array}\right), \ \ \ h=\left(\begin{array}{cc}
	1&0 \\[1mm]
	0&-1 \\[1mm]
	\end{array}\right).$ 
\end{center}
Multiplication in this algebra is as follows:
\begin{center}
	$\begin{array}{lll}
	[e,f]=h,& [h,e]=2e, & [f,h]=2f.
	\end{array} $
\end{center}

First, let us establish the following result.

\begin{proposition}\label{local_h_fixed} A local automorphism $\Delta$ of $\mathfrak{sl}_2$ that fixes $h$ is either an automorphism with matrix $\left(\begin{array}{lll}
	\lambda & 0 & 0\\
	0 & \lambda^{-1} & 0\\
	0& 0 & 1 \\
	\end{array}\right)$
	or anti-automorphism with matrix $\left(\begin{array}{lll}
	0 & \mu & 0\\
	\mu^{-1} & 0 & 0\\
	0& 0 & 1 \\
	\end{array}\right)$ in the ordered basis $(e,f,h)$ with $\lambda, \mu \in \mathbb{F}^*$. 	
\end{proposition}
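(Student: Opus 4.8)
The plan is to use the structure of automorphisms of $\mathfrak{sl}_2$ given by Theorem \ref{automorphisms} together with the fact that $\Delta$ fixes $h$ to pin down where $e$ and $f$ must go. First I would write $\Delta(e) = ae+bf+ch$ and $\Delta(f) = a'e+b'f+c'h$ in the basis $(e,f,h)$, so the matrix of $\Delta$ has last column $(0,0,1)^T$. The key observation from Remark \ref{polynom} is that $\Delta$ preserves characteristic polynomials: since $e$ is nilpotent ($p_e(\lambda)=\lambda^2$), $\Delta(e)$ must be nilpotent, and likewise for $f$; also $e+f$ and $e-f$ have prescribed eigenvalues, etc. A $2\times 2$ traceless matrix $ae+bf+ch$ has characteristic polynomial $\lambda^2 - (ab+c^2)$ (up to sign conventions), so nilpotency of $\Delta(e)$ forces $ab + c^2 = 0$, and similarly $a'b' + (c')^2 = 0$ for $\Delta(f)$.

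Next I would extract more relations by applying $\Delta$ to other elements whose characteristic polynomials are known. For instance $h = [e,f]$ is not directly helpful since $\Delta$ is only linear, but $e+f$ is a semisimple element with eigenvalues $\pm 1$, so $\Delta(e+f) = \Delta(e)+\Delta(f)$ must also have eigenvalues $\pm 1$; using the char poly formula this gives $(a+a')(b+b') + (c+c')^2 = 1$. Combining this with $ab+c^2=0$ and $a'b'+(c')^2=0$ yields $ab' + a'b + 2cc' = 1$. Doing the same for $e - f$ (eigenvalues $\pm 1$ as well, since $(e-f)^2 = -I$... wait, $e-f$ has char poly $\lambda^2+1$, eigenvalues $\pm i$) gives $-ab' - a'b + 2cc' = -1$, i.e. $ab'+a'b - 2cc' = 1$. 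Subtracting the two relations forces $cc' = 0$ and $ab'+a'b=1$. One more test element, say $e + h$ or a generic combination, should then force $c = c' = 0$ outright (I expect the pair of nilpotency conditions $ab+c^2=0$, $a'b'+c'^2=0$ together with $cc'=0$ and one further char-poly constraint to kill both $c$ and $c'$).

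Once $c = c' = 0$, nilpotency gives $ab = 0$ and $a'b' = 0$, while $ab'+a'b = 1$ and injectivity (the $2\times 2$ block $\begin{pmatrix} a & a' \\ b & b'\end{pmatrix}$ must be invertible) rule out the degenerate possibilities. So either $b = a' = 0$, giving $\Delta(e) = \lambda e$, $\Delta(f) = \lambda^{-1} f$ (using $ab' = 1$), which is the claimed automorphism; or $a = b' = 0$, giving $\Delta(e) = \mu f$, $\Delta(f) = \mu^{-1} e$, the claimed anti-automorphism. The final step is to verify that these two families genuinely are an automorphism and an anti-automorphism respectively — for the first, check $\Delta([e,f]) = \Delta(h) = h = [\lambda e, \lambda^{-1} f]$ and the $[h,e]$, $[f,h]$ relations; for the second, check the reversed bracket identities — which is a short direct computation using the multiplication table.

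The main obstacle I anticipate is bookkeeping the sign conventions in the characteristic polynomial of $ae+bf+ch$ and making sure enough test elements are used to force $c=c'=0$ without circular reasoning; a clean choice is to test on $e$, $f$, $e+f$, $e-f$, and one more semisimple element, then solve the resulting polynomial system. An alternative, possibly slicker route avoids char polys entirely: note that $\Delta(e)$ is conjugate to $e$ hence rank-one nilpotent, and the centralizer / ad-nilpotency structure might pin things down faster, but I expect the characteristic-polynomial bookkeeping to be the most transparent even if slightly tedious.
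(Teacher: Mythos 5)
Your overall strategy is the same as the paper's (pin down $\Delta(e),\Delta(f)$ by preservation of characteristic polynomials at well-chosen test elements; the paper's parametrization $\Delta(e)=T_e^{-1}eT_e$ just encodes automatically the nilpotency conditions $ab+c^2=0$, $a'b'+c'^2=0$ that you derive from $p_e$ and $p_f$), and your endgame once $c=c'=0$ is fine. But there is a genuine gap in the middle. The relation you extract from $e-f$ is wrong: for $xe+yf+zh$ the characteristic polynomial is $\lambda^2-(xy+z^2)$, so the condition at $e-f$ reads $(a-a')(b-b')+(c-c')^2=-1$, and after using the two nilpotency relations this becomes $ab'+a'b+2cc'=1$ --- exactly the same equation as the one from $e+f$ (the cross term of $(c-c')^2$ is $-2cc'$, not $+2cc'$). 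So $e-f$ gives no new information, $cc'=0$ is not established, and the subtraction step collapses.

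This matters because ``one more test element'' then cannot finish the job. Given the correct relations $ab+c^2=0$, $a'b'+c'^2=0$, $ab'+a'b+2cc'=1$, testing any further element $xe+yf+zh$ yields only the single linear condition $2z(xc+yc')=0$; one such element can never force both $c=0$ and $c'=0$. Concretely, the map with $\Delta(e)=e$, $\Delta(f)=-c'^2e+f+c'h$, $\Delta(h)=h$ satisfies characteristic-polynomial preservation at $e$, $f$, $e\pm f$ and $e+h$ for every $c'$, so your proposed test set does not exclude it. The fix is exactly the paper's choice of test elements: use both $e+h$ and $f+h$, which (together with the nilpotency relations) give $2c+1=1$ and $2c'+1=1$, i.e.\ $c=c'=0$ directly, after which your case analysis with $ab=0$, $a'b'=0$, $ab'+a'b=1$ and invertibility goes through and reproduces the two families $\Delta_1$, $\Delta_2$.
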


\begin{proof} Let $\Delta:\mathfrak{sl}_2\rightarrow\mathfrak{sl}_2$ be a local automorphism that fixes $h$. Then by the description of the automorphisms of $\mathfrak{sl}_2$ in Theorem \ref{automorphisms} we obtain that $\Delta(e)=T_e^{-1}eT_e$ and $\Delta(f)=T_f^{-1}fT_f$ for some invertible matrices $T_e$ and $T_f$. Simple manipulations show that 
	$$\Delta(e)=\frac{1}{|T_e|}(\alpha^2 e -\beta^2 f +\alpha\beta h) \text{ and } \Delta(f)=\frac{1}{|T_f|}(-\gamma^2 e +\delta^2 f -\gamma\delta h),
	$$
	where $(\beta \ \alpha)$ is the second row of $T_e$, $(\delta \ \gamma) $ is the first row of $T_f$, and $|A|$ is the determinant of $A$.
	
	Using the linearity of $\Delta$ we have $$\Delta(e+h)=\frac{1}{|T_e|}(\alpha^2 e -\beta^2 f 			+\alpha\beta h)+h=\left(\begin{array}{cc}
	\frac{\alpha\beta}{|T_e|}+1&\frac{\alpha^2}{|T_e|}\\			\frac{-\beta^2}{|T_e|} & -\frac{\alpha\beta}{|T_e|}-1 			\end{array}\right).$$
	
	The characteristic polynomial of the last matrix  is $-\displaystyle \frac{2\alpha\beta}{|T_e|}+x^2-1$, and by Remark \ref{polynom} is equal to the characteristic polynomial, $p_{e+h}(x)=x^2-1$, of $e+h$.
	This yields $\alpha\beta=0$. Similarly, $x^2-1=p_{f+h}(x)=p_{\Delta (f+h)}(x)=\displaystyle \frac{2\gamma\delta}{|T_f|}+x^2-1$ implies $\delta\gamma=0$. Moreover, $x^2-1=p_{e+f}(x)=p_{\Delta(e+f)}(x)=\displaystyle x^2-\frac{1}{|T_e||T_f|}(\alpha\delta-\gamma\beta)^2	$ implies $(\alpha\delta-\gamma\beta)^2=|T_e||T_f|$.
	Since $T_e$ and $T_f$ are invertible, $\alpha$ and $\beta$ cannot be zero at the same time. Similarly for $\alpha$ and $\gamma$, $\delta$ and $\gamma$, $\delta$ and $\beta$. This gives us the following cases:
	
	\medskip

	\textsc{Case 1.} $\alpha\neq 0$, $\beta=0$, $\delta\neq 0$, $\gamma=0$. This case leads to $|T_e||T_f|=(\alpha\delta)^2$.
	
	\medskip
	
	\textsc{Case 2.} $\alpha=0$, $\beta\neq 0$, $\delta=0$, $\gamma\neq 0$. In this case $|T_e||T_f|=(\gamma\beta)^2$.
	
	\noindent	The only possible result is two local automorphisms: 
	\begin{center}
		$\Delta_1(e)=\lambda e, \ \Delta_1(f)=\frac{1}{\lambda}f, \  \Delta_1(h)=h$
	\end{center} and
	\begin{center}
		$\Delta_2(e)=\mu f, \ \Delta_2(f)=\frac{1}{\mu}e, \ \Delta_2(h)=h,$
	\end{center}
	where $\lambda=\displaystyle \frac{\alpha^2}{|T_e|}$ and $\mu=\displaystyle \frac{-\beta^2}{|T_e|}$.  One can check that $\Delta_1$ is an automorphism and $\Delta_2$ is an anti-automorphism of $\mathfrak{sl}_2$.
\end{proof}

We now establish the full description of the local automorphisms of $\mathfrak{sl}_2$.

\begin{theorem} The group  $\LAut(\mathfrak{sl}_2)$ 
	coincides with $\Aut^\pm(\mathfrak{sl}_2)$. 
\end{theorem}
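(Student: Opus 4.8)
The plan is to establish the two inclusions separately. The inclusion $\Aut^\pm(\mathfrak{sl}_2)\subseteq\LAut(\mathfrak{sl}_2)$ is already available: it is precisely Corollary~\ref{signed_group_is_a_subgroup}. So the whole content of the theorem lies in the reverse inclusion, namely that every local automorphism of $\mathfrak{sl}_2$ is either an automorphism or an anti-automorphism, and the strategy is to reduce an arbitrary local automorphism to one that fixes $h$, where Proposition~\ref{local_h_fixed} already tells us everything.

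Concretely, let $\Delta\in\LAut(\mathfrak{sl}_2)$. By Definition~\ref{definition_local} there is $\varphi_h\in\Aut(\mathfrak{sl}_2)$ with $\Delta(h)=\varphi_h(h)$. Since $\Aut(\mathfrak{sl}_2)$ is a group, $\varphi_h^{-1}\in\Aut(\mathfrak{sl}_2)\subseteq\LAut(\mathfrak{sl}_2)$, so $\Delta':=\varphi_h^{-1}\circ\Delta$ is again a local automorphism (being a composition of two local automorphisms), and it satisfies $\Delta'(h)=\varphi_h^{-1}\bigl(\varphi_h(h)\bigr)=h$. Thus $\Delta'$ is a local automorphism of $\mathfrak{sl}_2$ fixing $h$, so Proposition~\ref{local_h_fixed} applies and $\Delta'$ is one of the maps $\Delta_1$ (an automorphism) or $\Delta_2$ (an anti-automorphism) described there; in either case $\Delta'\in\Aut^\pm(\mathfrak{sl}_2)$.

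Finally, since $\varphi_h\in\Aut(\mathfrak{sl}_2)\subseteq\Aut^\pm(\mathfrak{sl}_2)$ and $\Aut^\pm(\mathfrak{sl}_2)$ is a group, $\Delta=\varphi_h\circ\Delta'\in\Aut^\pm(\mathfrak{sl}_2)$; more precisely $\Delta\in\Aut(\mathfrak{sl}_2)$ when $\Delta'=\Delta_1$ and $\Delta\in\Aut^-(\mathfrak{sl}_2)$ when $\Delta'=\Delta_2$. This gives $\LAut(\mathfrak{sl}_2)\subseteq\Aut^\pm(\mathfrak{sl}_2)$ and hence equality. Combined with the fact recalled in Section~1 that $\Aut(\mathfrak{sl}_2)$ has index two in $\Aut^\pm(\mathfrak{sl}_2)$, this also yields at once that $\Aut(\mathfrak{sl}_2)$ is a normal subgroup of index two in $\LAut(\mathfrak{sl}_2)$.

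I do not expect a serious obstacle in this argument, since all the delicate case analysis (via characteristic polynomials of matrices such as $e+h$, $f+h$, $e+f$) was carried out already in Proposition~\ref{local_h_fixed}. The only points that require care are the bookkeeping ones: that pre-composition with the automorphism $\varphi_h^{-1}$ keeps us inside $\LAut(\mathfrak{sl}_2)$ and really does send $\Delta(h)$ back to $h$, and that the final composition with $\varphi_h$ stays inside $\Aut^\pm(\mathfrak{sl}_2)$ -- the latter relying on the observation from Section~1 that composing an automorphism with an anti-automorphism yields an anti-automorphism.
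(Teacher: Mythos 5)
Your argument is correct and matches the paper's proof essentially verbatim: both reduce an arbitrary local automorphism $\Delta$ to $\varphi_h^{-1}\circ\Delta$, which fixes $h$, invoke Proposition~\ref{local_h_fixed} to identify it as $\Delta_1$ or $\Delta_2$, and conclude $\Delta\in\Aut^\pm(\mathfrak{sl}_2)$, with Corollary~\ref{signed_group_is_a_subgroup} supplying the reverse inclusion. No issues.
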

\begin{proof}
	Let $\Delta'$ be an arbitrary local automorphism of $\mathfrak{sl}_2$. For every $x\in \mathfrak{sl}_2$ we have $\Delta'(x)=\varphi_x(x)$ for some $\varphi_x\in \Aut(\mathfrak{sl}_2)$. Then  $\varphi^{-1}_h \circ \Delta'$ is a local automorphism as the composition of two local automorphisms and $(\varphi^{-1}_h \circ \Delta')(h)=\varphi^{-1}_h (\varphi_h(h))=h$. By Proposition \ref{local_h_fixed}  $\varphi^{-1}_h \circ \Delta'$ is either equal to the automorphism $\Delta_1$ or to 
	the anti-automorphism $\Delta_2$ defined in the proof of Proposition \ref{local_h_fixed}. Hence, $\Delta'$ is either an automorphism or an anti-automorphism, and  $\LAut(\mathfrak{sl}_2)\subseteq \Aut^\pm(\mathfrak{sl}_2)$. On the other hand, Corollary \ref{signed_group_is_a_subgroup} claims the converse inclusion. Thus, $\LAut(\mathfrak{sl}_2)=\Aut^\pm(\mathfrak{sl}_2)$. \end{proof}

\begin{corollary}
	Every local automorphism of $\mathfrak{sl}_2$ is either an automorphism $X\mapsto A^{-1}XA$ or an anti-automorphism $X\mapsto A^{-1}X^TA $ for any invertible $A$. 
\end{corollary}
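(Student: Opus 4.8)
The plan is to deduce this corollary directly from the theorem just proved, namely $\LAut(\mathfrak{sl}_2)=\Aut^\pm(\mathfrak{sl}_2)$, together with the explicit description of $\Aut(\mathfrak{sl}_2)$ in Theorem \ref{automorphisms}. By the theorem, any local automorphism $\Delta$ of $\mathfrak{sl}_2$ lies in $\Aut^\pm(\mathfrak{sl}_2)=\Aut(\mathfrak{sl}_2)\cup\Aut^-(\mathfrak{sl}_2)$, so there are exactly two cases to handle.

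In the first case, $\Delta\in\Aut(\mathfrak{sl}_2)$. Here Theorem \ref{automorphisms} (applied over the algebraically closed field, or after noting that the automorphisms of the split form $\mathfrak{sl}_2$ over a characteristic-zero field are all inner and hence of this shape) gives that $\Delta$ has the form $X\mapsto A^{-1}XA$ for some invertible matrix $A$. In the second case, $\Delta\in\Aut^-(\mathfrak{sl}_2)$; then, as recorded in the proof of the preceding Proposition, $\Delta=\Delta_0\circ\varphi$ where $\Delta_0$ is the transpose anti-automorphism $X\mapsto X^T$ and $\varphi\in\Aut(\mathfrak{sl}_2)$, so $\varphi$ has the form $X\mapsto B^{-1}XB$. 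Composing, $\Delta(X)=(B^{-1}XB)^T=B^T X^T (B^{-1})^T=(B^T)^{-1}\,{}$—wait, more carefully: writing $A=(B^T)^{-1}$ one checks $\Delta(X)=A^{-1}X^TA$, which is the claimed anti-automorphism form.

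Finally I would address the phrase ``for any invertible $A$'': the point is that the classes $\{X\mapsto A^{-1}XA\}$ and $\{X\mapsto A^{-1}X^TA\}$ sweep out, as $A$ ranges over all invertible matrices, precisely $\Aut(\mathfrak{sl}_2)$ and $\Aut^-(\mathfrak{sl}_2)$ respectively — this is immediate from Theorem \ref{automorphisms} for the first family, and follows for the second since every anti-automorphism is $\Delta_0$ composed with an automorphism and conversely every map $X\mapsto A^{-1}X^TA$ is such a composition and is readily verified to reverse the bracket. Hence the union of the two families is exactly $\Aut^\pm(\mathfrak{sl}_2)=\LAut(\mathfrak{sl}_2)$, proving the corollary.

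I do not expect any genuine obstacle here, since this is essentially a restatement of the theorem in matrix coordinates; the only mild care needed is the bookkeeping in the anti-automorphism case, checking that the transpose-and-conjugate normal form is preserved under composition and correctly identifying the conjugating matrix, and making sure the field-of-definition subtlety in Theorem \ref{automorphisms} (stated over an algebraically closed field) does not cause trouble — which it does not, because the relevant automorphism group of the split $\mathfrak{sl}_2$ is already the full $\mathrm{PGL}_2$ of inner automorphisms over any characteristic-zero field.
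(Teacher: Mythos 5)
Your argument is correct and is exactly the intended deduction: the paper states this corollary without proof as an immediate consequence of the theorem $\LAut(\mathfrak{sl}_2)=\Aut^\pm(\mathfrak{sl}_2)$ together with Theorem \ref{automorphisms} and the factorization of anti-automorphisms as transpose composed with an inner automorphism. Your bookkeeping in the anti-automorphism case (taking $A=(B^T)^{-1}$) and your remark on the field of definition are both fine, the latter being if anything more careful than the paper itself.
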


The next proposition shows that for a linear map $\Delta:\mathfrak{sl}_n \to \mathfrak{sl}_n$  to be a local automorphism it is not enough to check that $\Delta$ acts as an automorphism on each basis element of $\mathfrak{sl}_n$.

\begin{proposition}\label{counterexample}
	Let $n\geq 3$ and $\alpha\in\mathbb{F}^*$. Define a linear map $\Delta_\alpha:\mathfrak{sl}_n\rightarrow
	\mathfrak{sl}_n$ by setting
	\begin{align*}
	&\Delta_\alpha(E_{1,n-1})=E_{n1}+\alpha E_{n,n-1}\\
	&\Delta_\alpha(E_{1n})=E_{1,n-1} \\
	&\Delta_\alpha(E_{n1})=E_{n,n-1}\\
	&\Delta_\alpha(E_{n,n-1})=E_{1n}
	\end{align*}
	and fixing all the other matrix units. Then $\Delta_\alpha$ agrees with automorphisms on each basis element of $\mathfrak{sl}_n$, but $\Delta_{\alpha} $ is not a local automorphism of $\mathfrak{sl}_n$. 
\end{proposition}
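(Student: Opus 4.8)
The plan is to treat the two assertions separately; the first is a direct check on each basis vector, and the second hinges on evaluating $\Delta_\alpha$ at one well-chosen element and comparing a similarity invariant.

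For the first assertion, every basis element that $\Delta_\alpha$ fixes --- namely each $h_k$ and each matrix unit other than $E_{1,n-1},E_{1n},E_{n1},E_{n,n-1}$ --- is handled by the identity automorphism. It then remains to treat the four distinguished basis elements (all genuine off-diagonal matrix units, which is precisely why $n\ge 3$ is needed, so that $E_{1,n-1}$ makes sense). Here I would note that $\Delta_\alpha$ carries each of them to a rank-one matrix of square zero: $E_{1,n-1}\mapsto E_{n1}+\alpha E_{n,n-1}$ is supported on the single row $n$ and squares to zero since its $(n,n)$-entry vanishes, while $E_{1n}\mapsto E_{1,n-1}$, $E_{n1}\mapsto E_{n,n-1}$, $E_{n,n-1}\mapsto E_{1n}$ are themselves matrices of this type. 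Since any two rank-one square-zero matrices are conjugate (both are conjugate to $E_{12}$), in each of the four cases there is an invertible $A$ with $\Delta_\alpha(Y)=A^{-1}YA$, and by Theorem \ref{automorphisms} the map $X\mapsto A^{-1}XA$ is an automorphism of $\mathfrak{sl}_n$ that agrees with $\Delta_\alpha$ at $Y$. This gives the first claim.

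For the second assertion I would test $\Delta_\alpha$ on $X=E_{1n}+E_{n1}$, a matrix of rank $2$ whose nonzero rows are rows $1$ and $n$. Then $\Delta_\alpha(X)=\Delta_\alpha(E_{1n})+\Delta_\alpha(E_{n1})=E_{1,n-1}+E_{n,n-1}$, whose only nonzero column is column $n-1$, so it has rank $1$. If $\Delta_\alpha$ were a local automorphism, there would be $\varphi_X\in\Aut(\mathfrak{sl}_n)$ with $\Delta_\alpha(X)=\varphi_X(X)$; extending scalars to $\overline{\mathbb{F}}$ if necessary and applying Theorem \ref{automorphisms}, $\varphi_X$ acts as $X\mapsto A^{-1}XA$ or $X\mapsto-A^{-1}X^TA$, and in either case $\Delta_\alpha(X)$ has the same rank as $X$, since conjugation and transposition preserve rank. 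As $1\ne 2$, this is a contradiction, so $\Delta_\alpha\notin\LAut(\mathfrak{sl}_n)$. One could equally well compare characteristic polynomials in the spirit of Remark \ref{polynom}: $X$ has eigenvalues $1,-1,0,\dots,0$ whereas $\Delta_\alpha(X)$ is nilpotent, and $-X^{T}$ is similar to $-X$, which has the same eigenvalues as $X$.

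I do not expect a genuine obstacle: the computations are short once the four images are written out. The only points needing attention are verifying that $\Delta_\alpha(X)$ really is rank one and nilpotent --- this is exactly where the hypothesis $n\ge 3$ (equivalently $n-1\notin\{1,n\}$) enters --- checking that all listed images lie in $\mathfrak{sl}_n$ so that $\Delta_\alpha$ is a well-defined linear endomorphism, and invoking Theorem \ref{automorphisms} over the algebraic closure because $\mathbb{F}$ need not be algebraically closed.
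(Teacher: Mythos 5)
Your proof is correct, but it differs from the paper's in both halves, and in an interesting way. For the basis elements, the paper writes down four explicit invertible matrices $T_1,\dots,T_4$ and verifies $\Delta_\alpha(Y)=T_i^{-1}YT_i$ by direct computation, whereas you invoke the general fact that all rank-one square-zero matrices are conjugate (already over $\mathbb{F}$, via the rational canonical form), which is shorter and explains \emph{why} such conjugating matrices must exist, at the cost of not exhibiting them. For the non-locality, the paper squares the map: $\Delta_\alpha^2(E_{1,n-1})=E_{n,n-1}+\alpha E_{1n}$ has rank $2$ while $E_{1,n-1}$ has rank $1$, and since $\LAut(\mathfrak{sl}_n)$ is closed under composition (Proposition \ref{preliminary}), $\Delta_\alpha$ cannot be a local automorphism. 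You instead find a single-point obstruction for $\Delta_\alpha$ itself: at $X=E_{1n}+E_{n1}$ (rank $2$, eigenvalues $\pm1$) one has $\Delta_\alpha(X)=E_{1,n-1}+E_{n,n-1}$, which is rank one and nilpotent, and no automorphism of the form $X\mapsto A^{-1}XA$ or $X\mapsto -A^{-1}X^TA$ can alter rank or spectrum. Your route is more direct and dispenses with the appeal to the group (monoid) structure of $\LAut(\mathfrak{sl}_n)$; it also shows that $\Delta_\alpha$ already fails to interpolate automorphisms at a concrete non-basis element, which is arguably the sharper statement. You are also slightly more careful than the paper in passing to $\overline{\mathbb{F}}$ before applying Theorem \ref{automorphisms}, since that classification is stated over an algebraically closed field and rank is insensitive to scalar extension. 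Both arguments are sound; the paper's buys an explicit, hand-checkable certificate for the first claim and an illustration of the composition trick, yours buys brevity and a more elementary contradiction.
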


\begin{proof} Consider the matrices 	
	\begin{align*}
	T_1&= I+E_{1n}+E_{n-1,1} +(\alpha-1) E_{n-1,n-1}+E_{n,n-1}-E_{nn}, \\
	T_2&=I -E_{n-1,n-1}-E_{nn}+E_{n-1,n}+E_{n,n-1},\\
	T_3&=I-E_{11}+E_{1,n-1}+E_{n-1,1}-E_{n-1,n-1},\\
	T_4&=I-E_{11}+E_{1,n-1}-E_{n-1,n-1}+E_{n-1,n}+E_{n1}-E_{nn}.
	\end{align*}
	Note that they are invertible and the following equalities hold:
	\begin{align*}
	E_{1,n-1}T_1= & E_{1,n-1}+E_{11}+(\alpha-1)E_{1,n-1}=E_{11}+\alpha E_{1,n-1}\\
	=& (E_{n1}+E_{11}-E_{n1})+\alpha(E_{n,n-1}- E_{n,n-1}+E_{1,n-1})=T_1(E_{n1}+\alpha E_{n,n-1});\\
	E_{1n}T_2= & E_{1n}-E_{1n}+E_{1,n-1}= E_{1,n-1}= T_2E_{1,n-1}; \\
	E_{n1}T_3= & E_{n1}-E_{n1}+E_{n,n-1}=E_{n,n-1}=T_3E_{n,n-1};\\
	E_{n,n-1}T_4= & E_{n,n-1}-E_{n,n-1}+E_{nn}=E_{nn}=E_{1n}-E_{1n} +E_{nn}  = T_4E_{1n}.
	\end{align*}
	Therefore, 	
	\begin{align*}
	&\Delta_\alpha(E_{1,n-1})=E_{n1}+\alpha E_{n,n-1}=T^{-1}_1E_{1,n-1}T_1,\\
	&\Delta_\alpha(E_{1n})=E_{1,n-1}=T^{-1}_2E_{1n}T_2, \\
	&\Delta_\alpha(E_{n1})=E_{n,n-1}=T^{-1}_3E_{n1}T_3,\\
	&\Delta_\alpha(E_{n,n-1})=E_{1n}=T^{-1}_4E_{n,n-1}T_4.
	\end{align*}

	Since on the other basis elements of $\mathfrak{sl}_n$ the map $\Delta_{\alpha}$ acts as the identity, we obtain that $\Delta_\alpha$ is a linear transformation that acts as matrix conjugations on each basis element of $\mathfrak{sl}_n$. 
	
	Note that $\Delta_{\alpha}^2(E_{1,n-1})=E_{n,n-1}+\alpha E_{1n}$ is a matrix of rank $2$. Since $\LAut(\mathfrak{sl}_n)$ is a group, if $\Delta_{\alpha}$ is a local automorphism, then so is $\Delta_{\alpha}^2$. However, the automorphisms of $\mathfrak{sl}_n$ by Theorem \ref{automorphisms} preserve rank of matrices. Therefore, we have a contradiction and $\Delta_{\alpha}$ is not a local automorphism. 
\end{proof}

\section*{Acknowledgments}
 The first author was supported by the National Science Foundation, grant number 1658672. The second and third authors were supported by the National Science Foundation, grant number NSF HRD 1302873. Authors would like to thank Shavkat Ayupov and Bakhrom Omirov for suggesting to carry out this research and for their useful comments.

\end{document}